\theoremstyle{definition}
\newtheorem{definition}{Definition}[section]
\newtheorem{theorem}[definition]{Theorem}
\newtheorem{lemma}[definition]{Lemma}
\newtheorem{corollary}[definition]{Corollary}
\theoremstyle{remark}
\newtheorem{remark}[definition]{Remark}
\newcommand{\TitleWithUrl}[1]{
	\IfEmptyBibField{doi}%
	{\IfEmptyBibField{url}{\textit{#1}}%
	{\IfEmptyBibField{eprint}{\href {\BibField{url}}{\textit{#1}}}{\textit{#1}}}%
	}%
	{\href {https://doi.org/\BibField{doi}}{\textit{#1}}}
	}
\renewcommand{\eprint}[1]{
	\IfEmptyBibField{url}{\url{#1}}%
	{\href {\BibField{url}}{#1}}
	}
\title[Lorentz Darboux transformations of curves]{Darboux transformations of spacelike curves in the Lorentz-Minkowski plane}
\author{Masaya Hara}
\address[Masaya Hara]{Department of Mathematics, Graduate School of Science, Kobe university, 1-1 Rokkodai-cho, Nada-ku, Kobe 657-8501, Japan}
\email{mhara@math.kobe-u.ac.jp}
\subjclass[2020]{Primary 53A04; Secondary 53A35, 53C50, 53D22}
\keywords{Darboux transformations, polarization, split complex numbers, Penrose diagram}
\begin{document}

\begin{abstract}
	This paper concerns (Lorentz-)Darboux transformations in the Lorentz-Minkowski plane $\mathbb{R}^{1,1}$. We use the Penrose diagram for conformal compactification and show some unique properties of Darboux transformations of spacelike curves in $\mathbb{R}^{1,1}$, especially with regard to singularities and blowup. Finally, we make some comments on Darboux transformation of type-changing curves.
\end{abstract}

\maketitle

\section*{Introduction}
Darboux transformations have been considered for studying some special classes of surfaces such as isothermic surfaces and constant mean curvature surfaces etc, since they preserve conformal curvature line coordinates.
Recently, Darboux transformations are also used for studying curves in Euclidean spaces, see for example \cites{MR4287306}.
In this paper, we focus on Darboux transformations of planar curves in the Lorentz-Minkowski plane.
In particular, we aim to investigate the singularities and blowups of Darboux transformations of spacelike curves and understand them geometrically.

According to \cites{MR175590, MR149912}, Lorentz-Minkowski spaces can be conformally compactified (called a \emph{Penrose diagram}), and we have types of infinities, see Definition~\ref{Definition:PenroseDiagram} (see also \cites{MR3792084, MR172678, MR1426141}).
We introduce \emph{split complex numbers}, which are identified with $\mathbb{R}^{1,1}$, and define (Lorentz-)Darboux transformations, see Definition~\ref{Definition:SpacelikeDarbouxCrossRatio}.
By using types of infinities from the Penrose diagram, we prove our main result Theorem~\ref{Theorem:NullInfinityOrthogonally}, which asserts that we can control the directions of Darboux transformations at infinities by using the arc-length polarization.
Finally, we give a brief discussion on Darboux transformations of type-changing curves.
\section{Preliminaries}
In this section, we review the geometry of the Lorentz-Minkowski plane. The basic analysis of the Lorentz-Minkowski plane is explained in \cites{MR2141751} in detail, and here we only include what we use in this paper.

\subsection{Lorentz-Minkowski plane}
Let $\mathbb{R}^{1,1}$ denote a 2-dimensional pseudo-Riemannian space with inner product $\langle \cdot , \cdot \rangle$ of signature $\left(+ -\right)$, that is, for $x, y \in \mathbb{R}^{1,1}$,
\[\langle x,y \rangle=\langle \left(x_1,x_2\right)^t,\left(y_1,y_2\right)^t \rangle=x_1y_1-x_2y_2,\]
and we use $|\cdot|^2$ to denote the inner product of a vector with itself, i.e. the "length squared" in $\mathbb{R}^{1,1}$, which in this case can be nonpositive.

\subsection{Split complex plane}
The geomety of the Lorentz-Minkowski plane can be modelled by the \emph{split complex plane} $\mathbb{C}'$ defined as
\[\mathbb{C}' :=\{a+j b \mid a,b\in \mathbb{R},\ j^2=1,\ j \notin \mathbb{R}\}.\]

We identify $\mathbb{R}^{1,1}$ with $\mathbb{C}'$ by
\begin{equation*}
	\mathbb{C}' \ni a+jb \mapsto \left(a,b\right) \in \mathbb{R}^{1,1}.
\end{equation*}
Note that $\mathbb{C}'$ is not a field but a commutative ring, that is, the inverse of $x=x_1+j x_2 \in \mathbb{C}'$ given by
\[x^{-1}=\dfrac{\bar{x} }{|x|^2}=\dfrac{x_1-jx_2}{x_1^2-x_2^2},\]
does not exist when $x_1=\pm x_2$. The algebra $\mathbb{C}'$ admits such \emph{zero divisors} \cites{MR3226609}. We denote by $A$ the set consisting of the zero divisors and the zero point (0) of $\mathbb{C}'$, that is,
\[A=\{a\pm j a \mid a \in \mathbb{R},\ j^2=1,\ j \notin \mathbb{R}\} \subset \mathbb{C}'.\]
By applying the above identification, we consider split complex numbers to have signature type as well, and so the inverse of a lightlike split complex number is undefined.

\begin{remark}
	In this paper, we use the naming "split complex numbers", which is taken from \cites{JosephMthesis, MR0056937} for example. It is also named Lorentz numbers, para-complex numbers, hyperbolic numbers etc. For further information, see \cites{MR3226609, MR2141751}.
\end{remark}

\subsection{Penrose diagram}

\begin{wrapfigure}[12]{r}[0mm]{4.2truecm}
	\vspace{-1\baselineskip}
	\begin{tikzpicture}[scale=1.5]
		\draw(0,1)--(-1,0)--(0,-1)--(1,0)--(0,1);
		\draw(0,1)node[above=3pt]{$I^+$};
		\draw(-1,0)node[left=3pt]{$I^0$};
		\draw(0,-1)node[below=3pt]{$I^-$};
		\draw(1,0)node[right=3pt]{$I^0$};
		\draw(0.8,0.8)node{$\mathscr{I}^+$};
		\draw(-0.75,0.75)node{$\mathscr{I}^+$};
		\draw(-0.75,-0.75)node{$\mathscr{I}^-$};
		\draw(0.75,-0.75)node{$\mathscr{I}^-$};
		\draw[decorate,decoration={brace, mirror, amplitude=2mm, raise=3pt}](0,1)--(-1,0);
		\draw[decorate,decoration={brace, mirror, amplitude=2mm, raise=3pt}](-1,0)--(0,-1);
		\draw[decorate,decoration={brace, mirror, amplitude=2mm, raise=3pt}](0,-1)--(1,0);
		\draw[decorate,decoration={brace, mirror, amplitude=2mm, raise=3pt}](1,0)--(0,1);
		\draw[thick, ->, >=stealth](-1,0)--(0.95,0);
		\draw[thick, ->, >=stealth](0,-1)--(0,0.95);
		\draw(0.8,-0.05)node[above left]{$\psi$};
		\draw(0.05,0.85)node[below left]{$\zeta$};
		\draw(0,0)node[below left]{$O$};
	\end{tikzpicture}
	\caption{Penrose diagram.}\label{Picture:PenroseDiagram}
\end{wrapfigure}
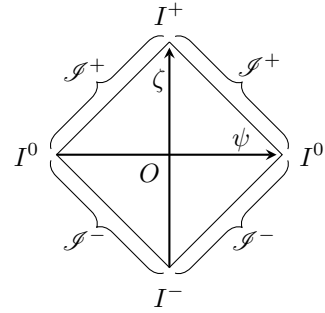

Next, we introduce a conformal compactification of $\mathbb{R}^{1,1} \cong \mathbb{C}'$ called the \emph{Penrose diagram}. It is named after the mathematical physicist Roger Penrose and is well known in theoretical physics. (See \cites{MR3792084,MR1426141}.)

First of all, we make a change of coordinates for $(x,y)$ in $\mathbb{R}^{1,1}$ to $u=x+y$ and $v=x-y$, which is called \emph{null coordinates} (or \emph{light-cone coordinates}). Then $u$ and $v$ take values in $-\infty < u,v < \infty$.
Now, we define $\psi$ and $\zeta$ by
\begin{equation*}
	\displaywidth=\parshapelength\numexpr\prevgraf+2\relax
	u=\tan \frac{\psi+\zeta}{2}\ \text{and}\ v=\tan \frac{\psi-\zeta}{2}.
\end{equation*}
Then $-\pi \leq \psi \pm \zeta \leq \pi$ and we have
\[dx^2-dy^2=dudv=\Omega^2(d\psi^2-d\zeta^2),\]
where $\Omega^{-2}=4\cos^2 \frac{\psi+\zeta}{2} \cos^2 \frac{\psi-\zeta}{2}$.

Since $\Omega^2$ vanishes only on the boundary of the square made by the values of $(\psi, \zeta)$, and is positive in the interior, we obtain a conformal compactification into the interior of the square.

We summarize the above fact as a definition and define types of infinity as used in relativity.

\begin{definition}[\cites{MR3792084, MR175590, MR149912, MR172678, MR1426141}]\label{Definition:PenroseDiagram}
	Let $\mathcal{P}$ be a map from $\mathbb{R}^{1,1}$ to $\mathbb{R}^{1,1}$ defined by $\mathcal{P}(x,y)=(\psi,\zeta)$, where
	\begin{equation*}
		\left\{
			\begin{alignedat}{2}
				\psi & = \arctan(x+y)&  + & \arctan(x-y) \\[1pt]
				\zeta  & = \arctan(x+y) & - & \arctan(x-y).
			\end{alignedat}
		\right.
	\end{equation*}

	Then $\mathcal{P}$ is a conformal mapping of $\mathbb{R}^{1,1}$ into the interior of a square.
	Diagrams made by this mapping are called \emph{Penrose diagrams}.
	The boundary of a Penrose diagram consists of points at infinity, and there are types of infinities as follows:
	In the Figure~\ref{Picture:PenroseDiagram} to the above,
	\begin{itemize}
		\item the point $I^+ (I^-)$ is called the \emph{future (past) timelike infinity},
		\item the point $I^0$ is called the \emph{spatial infinity} and
		\item the edges $\mathscr{I}^+  (\mathscr{I}^-)$ are called the \emph{future (past) null infinity}.
	\end{itemize}
\end{definition}

\begin{remark}
	In this paper, we do not need to distinguish the future from the past, so we just use three types of infinities; \emph{spatial infinity} ($I_s$), \emph{timelike infinity} ($I_t$) and \emph{null infinity} ($\mathscr{I}$).
	Also note that a line segment of null infinity is open, that is, it does not include the spatial and timelike infinities at its ends.
\end{remark}
\section{Lorentz-Darboux transformations}
In this section, we would like to introduce Darboux transformations in the Lorentz-Minkowski plane, especially with regard to singularities and blow up.
From now on, we only consider smooth curves $x(t)$ on $\mathbb{C}'$.

\subsection{Spacelike curves}
We suppose that a curve $x(t):I \rightarrow \mathbb{C}'$ is spacelike, that is, $|x'(t)|^2>0$ for any $t \in I$, where $I \subset \mathbb{R}$ is an interval parametrized by $t$.
Further, we consider \emph{polarized curves} by including a non-vanishing quadratic differential $\tfrac{dt^2}{m}$ with $m(t):I \rightarrow \mathbb{R}^{\times}:=\mathbb{R}\backslash\{0\}$ (see \cites{MR3936232}).

The polarization $m(t)$ is called an \emph{arc-length polarization} for $x(t)$ if
\[m(t)\equiv \frac{1}{|x'(t)|^2}.\]
We introduce an extension of Darboux transformations, as follows.

On the complex plane, it is well known that Darboux transformations can be defined as satisfying an \emph{infinitesimal cross ratio} condition
\begin{equation}\label{Equation:TrueCrossRatio}
	\frac{x'\hat{x}'}{(\hat{x}-x)^2} = \frac{\lambda}{m}.
\end{equation}
In the split complex plane, however, $\hat{x}-x$ might be a zero divisor, so we need to avoid putting $\hat{x}-x$ in the denominator. Therefore we adopt the next equation as the definition of Darboux transformations in the split complex plane.
\begin{definition}\label{Definition:SpacelikeDarbouxCrossRatio}
	Let $x(t):\left(I, \tfrac{dt^2}{m}\right) \rightarrow \mathbb{C}'$ be a smooth spacelike curve polarized by $m(t): I \rightarrow \mathbb{R}^{\times}$. For some nonzero constant $\lambda \in \mathbb{R}^{\times}$, a curve $\hat{x}(t)$ satisfying
	\begin{equation}\label{Equation:CrossRatio}
		x'\hat{x}' = \frac{\lambda}{m}(\hat{x}-x)^2
	\end{equation}
	is called a \emph{(Lorentz-)Darboux transformation with parameter $\lambda$} of $x(t)$.
\end{definition}

\begin{figure}[hbt]
	\centering
	\begin{minipage}[b]{0.35\columnwidth}
		\centering
		\includegraphics[keepaspectratio, width=\columnwidth]{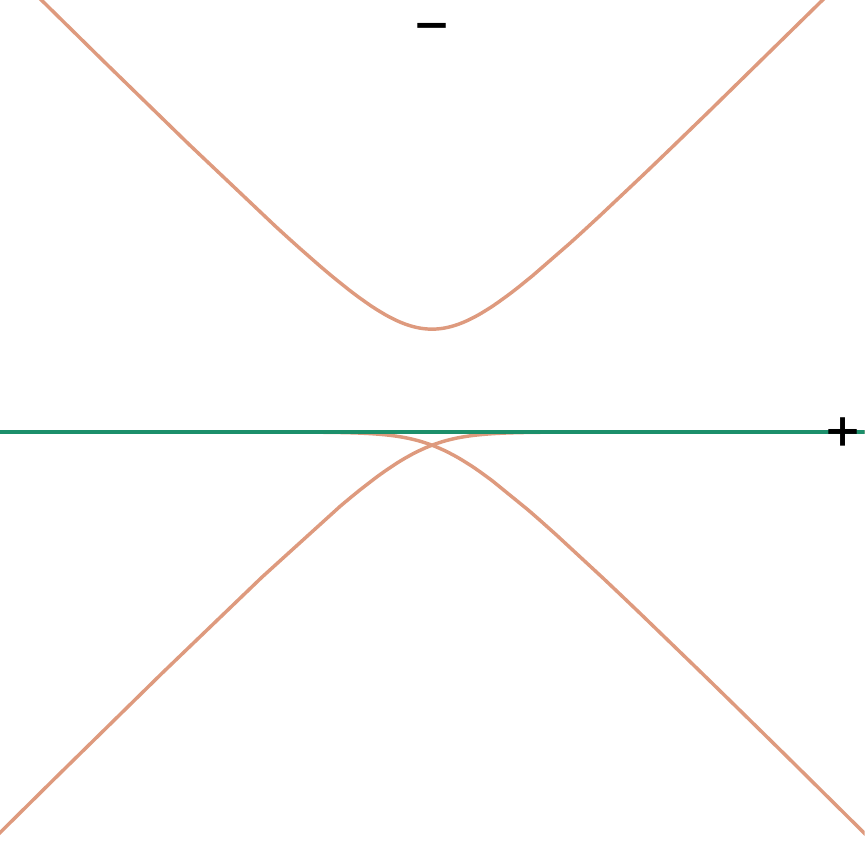}
		\subcaption{\upshape $\lambda >0$ case.}\label{DT1}
	\end{minipage}
	\hspace{0.1\columnwidth}
	\begin{minipage}[b]{0.35\columnwidth}
		\centering
		\includegraphics[keepaspectratio, width=\columnwidth]{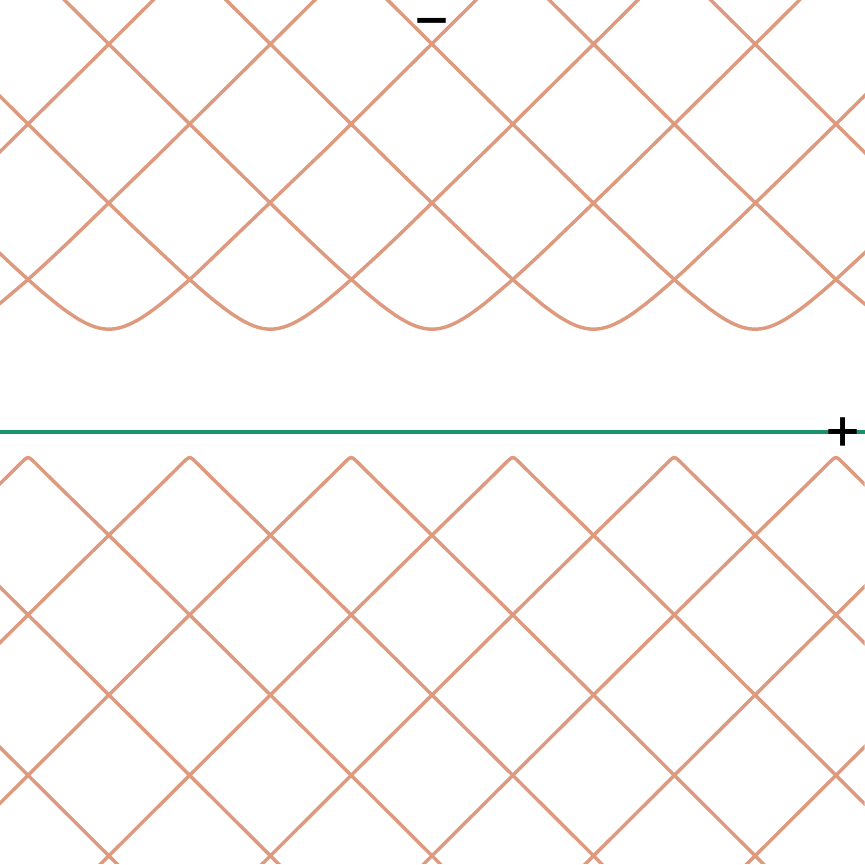}
		\subcaption{\upshape $\lambda <0$ case.}\label{DT2}
	\end{minipage}
	\caption{Darboux transformations of the horizontal line $x(t)=(t,0)$ in $\mathbb{C}'$.}\label{Picture:DarbouxTransformations}
\end{figure}

\begin{figure}[hbt]
	\centering
	\begin{minipage}[b]{0.375\columnwidth}
		\centering
		\includegraphics[keepaspectratio, width=\columnwidth]{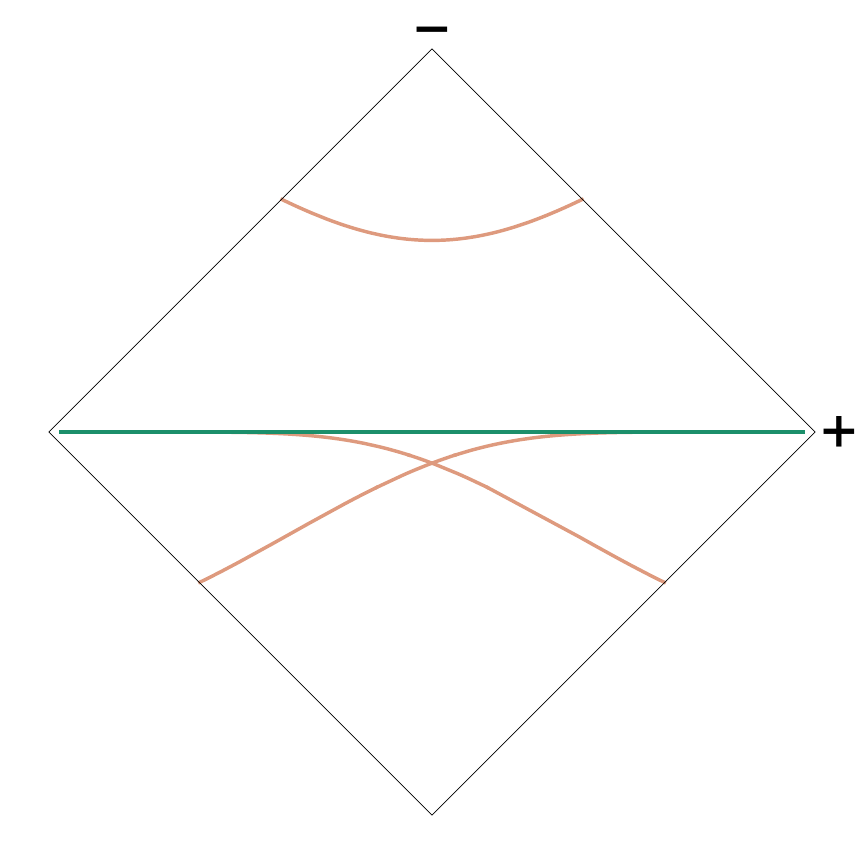}
		\subcaption{\upshape $\lambda >0$ case.}\label{PenroseDT1}
	\end{minipage}
	\hspace{0.05\columnwidth}
	\begin{minipage}[b]{0.375\columnwidth}
		\centering
		\includegraphics[keepaspectratio, width=\columnwidth]{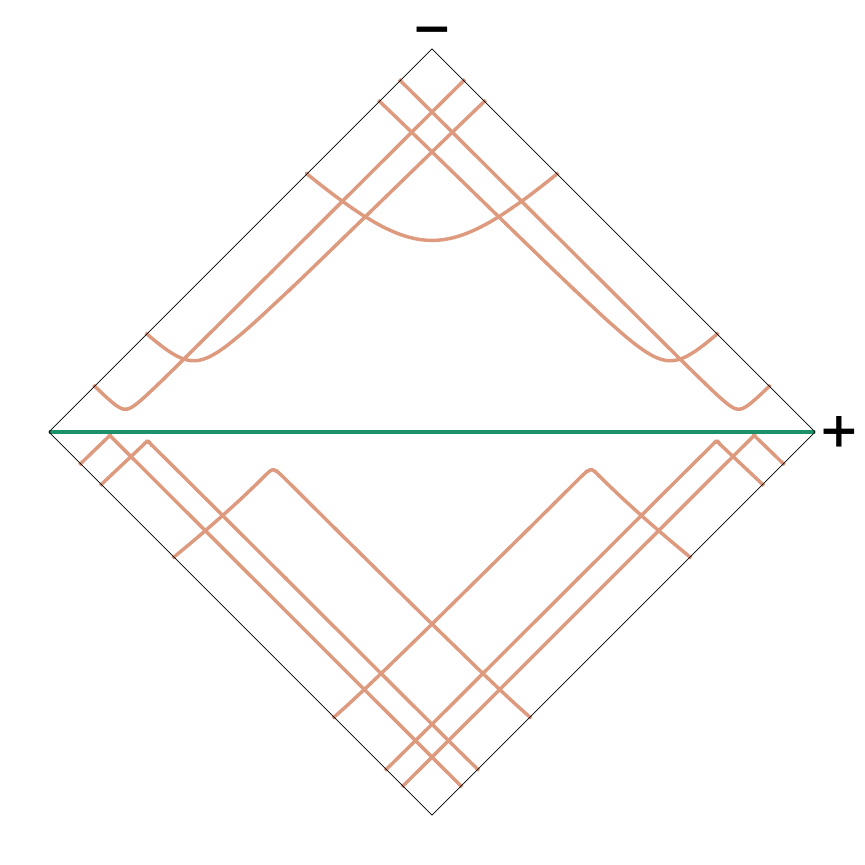}
		\subcaption{\upshape $\lambda <0$ case.}\label{PenroseDT2}
	\end{minipage}
	\caption{Darboux transformations of the horizontal line in $\mathbb{C}'$ parametrized by $x(t)=(t,0)$, now presented in the Penrose diagram.}\label{Picture:DarbouxTransformationsInPenroseDiagram}
\end{figure}

One of the essential characterizations of a Darboux pair is that there exists a \emph{common circle congruence}, that is, both members of the pair touch a circle tangentially at corresponding points for each $t$.
We have the same properties for a Lorentz-Darboux pair as well, as in the following discussion.

\begin{remark}
	In the equation~\eqref{Equation:CrossRatio}, $\hat{x}-x$ can be lightlike when $x'$ is spacelike.
	Then, at such a point, the common circle congruence to $x$ and $\hat{x}$ becomes a lightlike circle. We call such a point a \emph{degenerate point}. One can consider the Lorentz-Darboux transformation by using the original equation~\eqref{Equation:TrueCrossRatio} and taking limits at degenerate points. Furthermore, the equation~\eqref{Equation:CrossRatio} suggests that $\hat{x}-x$ can be 0, and then $\hat{x}=x$. But we omit such cases.
\end{remark}

\begin{lemma}\label{Lemma:Radius}
	Let $x$ and $\hat{x}$ be a Darboux pair in $\mathbb{C}'$ and $n$ be a normal of $x=(x_1,x_2)$ given by $n=(x_2',x_1')$. Then the radius $\xi$ of the common circle congruence is given by
	\[\xi=\frac{|\hat{x}-x|^2}{2\langle \hat{x}-x,n \rangle}.\]
\end{lemma}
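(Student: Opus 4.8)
The plan is to pin down the center of the common circle at each parameter $t$ as a point on the normal line to $x$, using only that this circle touches $x$ at $x(t)$ and touches $\hat{x}$ at $\hat{x}(t)$; the stated formula will then drop out of the single requirement that both contact points lie on the circle. First I would record that $n=(x_2',x_1')$ is Lorentz-orthogonal to the tangent, $\langle x',n\rangle=x_1'x_2'-x_2'x_1'=0$, and that since $x$ is spacelike the line $\mathbb{R}x'$ is nondegenerate, so $\mathbb{R}^{1,1}=\mathbb{R}x'\oplus\mathbb{R}n$ and the affine normal line to $x$ at $x(t)$ is exactly $\{x(t)+s\,n(t)\mid s\in\mathbb{R}\}$. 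A circle tangent to $x$ at $x(t)$ must therefore have center $c=x+\xi n$ for a unique scalar $\xi=\xi(t)$; with this normalization of $n$ (a unit timelike vector when $x$ is parametrized by arc length, in which case the circle has square radius $|c-x|^2=-\xi^2$) the coefficient $\xi$ is the radius in the sense of the statement.

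The second step is to impose that the same circle passes through $\hat{x}(t)$, i.e. $|c-\hat{x}|^2=|c-x|^2$. Writing $2c-x-\hat{x}=2\xi n+(x-\hat{x})$ and expanding,
\[
|c-\hat{x}|^2-|c-x|^2=\bigl\langle x-\hat{x},\ 2\xi n+(x-\hat{x})\bigr\rangle=-2\xi\,\langle\hat{x}-x,n\rangle+|\hat{x}-x|^2 ,
\]
so setting this to zero gives $\xi=\dfrac{|\hat{x}-x|^2}{2\langle\hat{x}-x,n\rangle}$, as asserted, valid precisely when $\langle\hat{x}-x,n\rangle\neq0$, that is, when $\hat{x}-x$ is not parallel to $x'$. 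When $\langle\hat{x}-x,n\rangle=0$ the center escapes to infinity and the common circle degenerates to a straight line — this is the blow-up locus examined later — and when $\hat{x}-x$ is lightlike the numerator vanishes and $\xi=0$, recovering the degenerate (lightlike) circle centered at $x$ mentioned in the preceding remark.

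It is worth noting that the Darboux equation \eqref{Equation:CrossRatio} is not used in deriving the formula itself; its role is upstream, in guaranteeing that the common circle congruence exists at all — namely that the circle through $x(t)$ and $\hat{x}(t)$ tangent to $x$ at $x(t)$ is automatically tangent to $\hat{x}$ at $\hat{x}(t)$. To verify that, I would run the classical infinitesimal cross ratio argument: four concircular points have real cross ratio, and letting two of them tend to $x(t)$ and $\hat{x}(t)$ along the directions $x'$ and $\hat{x}'$ turns this into the condition $x'\hat{x}'/(\hat{x}-x)^2\in\mathbb{R}$, which holds because \eqref{Equation:CrossRatio} makes it equal to $\lambda/m$. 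The one delicate point here — the main obstacle in this last step — is that $\hat{x}-x$ may be a zero divisor (a degenerate point), so one cannot divide by it; there I would argue directly from the polynomial identity \eqref{Equation:CrossRatio}, or pass to the limit from nearby nondegenerate parameters using \eqref{Equation:TrueCrossRatio} as indicated in the preceding remark.
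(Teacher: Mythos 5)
Your derivation is correct, and it takes a genuinely different (and more economical) route than the paper. The paper's proof writes the center of the common circle in two ways, as $x-\tfrac{n}{\sqrt{-|n|^2}}\xi=\hat{x}+\tfrac{\hat{n}}{\sqrt{-|\hat{n}|^2}}\xi$, pairs with $\hat{x}-x$, and then must show that the $\hat{n}$-term contributes the same as the $n$-term; this is done by invoking the Darboux equation through the reflection-type formula $\hat{x}'=P\left(|\hat{x}-x|^2x'-2\langle\hat{x}-x,x'\rangle(\hat{x}-x)\right)$, $P=-\tfrac{\lambda}{m|x'|^2}$, so the Darboux structure (and a WLOG arc-length normalization) enters the computation essentially. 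You instead use only that the circle is tangent to $x$ at $x(t)$ (center on the normal line, $c=x+\xi n$) and passes through $\hat{x}(t)$, and expand $|c-\hat{x}|^2-|c-x|^2=0$; the formula drops out in two lines, and you correctly identify that equation \eqref{Equation:CrossRatio} is only needed upstream, to guarantee tangency at $\hat{x}$ — which the lemma already presupposes by speaking of \emph{the} common circle congruence, so your closing cross-ratio sketch, while only a sketch, is not load-bearing. What the paper's longer symmetric computation buys is a built-in consistency check that both tangency conditions single out the same $\xi$, i.e. a partial verification that the common circle congruence exists for a Lorentz--Darboux pair, which is the point of the surrounding discussion; what yours buys is brevity, no computation of $\hat{n}$ or $|\hat{x}'|^2$, and no normalization. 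One small point to note: with your convention $c=x+\xi n$ the stated formula appears verbatim, whereas the paper's convention $c=x-\tfrac{n}{\sqrt{-|n|^2}}\xi$ strictly yields the opposite sign (the paper treats $\xi$ as a signed radius, cf. the expression $\xi=-\tfrac{1}{2\lambda\langle\hat{x}-x,n\rangle}$ appearing later in the proof of Theorem~\ref{Theorem:NullInfinityOrthogonally}), so your sign bookkeeping is, if anything, the cleaner one.
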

\begin{proof}
	Without loss of generality, we suppose that $|x'|^2=1$. Then we have the common circle congruence with radius $\xi$, which means
	\[x-\frac{n}{\sqrt{-|n|^2}}\xi=\hat{x}+\frac{\hat{n}}{\sqrt{-|\hat{n}|^2}}\xi,\]
	where $\hat{n}$ is a normal vector to $\hat{x}'$ and defined as like for $n$ of $x'$. Therefore,
	\[ \xi = \frac{|\hat{x}-x|^2}{\Big\langle \frac{n}{\sqrt{|x'|^2}}+\frac{\hat{n}}{\sqrt{|\hat{x}'|^2}}, \hat{x}-x \Big\rangle}.\]
	Here, if we write $\hat{x}'=P\left(|\hat{x}-x|^2x'-2 \langle \hat{x}-x,x' \rangle(\hat{x}-x) \right)$ so that $P=-\frac{\lambda}{m|x'|^2}$, we can verify that
	\begin{align*}
		\bigg\langle \frac{n}{\sqrt{|x'|^2}}+\frac{\hat{n}}{\sqrt{|\hat{x}'|^2}}, \hat{x}-x \bigg\rangle
		&= \langle n,\hat{x}-x \rangle +\frac{\langle\hat{n}, \hat{x}-x \rangle}{\sqrt{|\hat{x}'|^2}}\\
		&= \langle n,\hat{x}-x \rangle
		+\frac{P|\hat{x}-x|^2\langle n, \hat{x}-x \rangle}{\sqrt{P^2|\hat{x}-x|^4|x'|^2}}\\
		&= 2\langle n,\hat{x}-x \rangle.
	\end{align*}
	Thus we conclude that $\xi =\frac{|\hat{x}-x|^2}{2\langle \hat{x}-x,n \rangle}$.
\end{proof}

To close this subsection on Darboux transformations and polarization, we also give a lemma about arc-length polarization:

\begin{lemma}[\cites{MR4287306}]
	Let $x,\hat{x} : \left(I,\tfrac{dt^2}{m}\right) \rightarrow \mathbb{C}'$ be a Darboux pair with parameter $\lambda$,
	where $\tfrac{dt^2}{m}$ is the arc-length polarization of $x$. Then $\tfrac{ds^2}{m}$ is also the arc-length polarization of $\hat{x}$ if and only if $|\hat{x}-x|^2=\tfrac{1}{\lambda}$ at one point $t_* \in I$.
	Furthermore, if $|\hat{x}-x|^2=\tfrac{1}{\lambda}$ at one point, then $|\hat{x}-x|^2$ is constant for all points.
\end{lemma}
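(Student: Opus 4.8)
The plan is to combine the multiplicativity of the split-complex norm with a first-order differential equation for $|\hat{x}-x|^2$. First, since $x$ is spacelike, $x'$ is a unit in $\mathbb{C}'$ (not a zero divisor), so \eqref{Equation:CrossRatio} can be solved for $\hat{x}'$; using that $\tfrac{dt^2}{m}$ is the arc-length polarization of $x$, i.e. $m|x'|^2=1$, this collapses to the clean formula
\[
	\hat{x}' = \lambda\,(\hat{x}-x)^2\,\overline{x'}.
\]
Taking $|\cdot|^2$, using $|zw|^2=|z|^2|w|^2$ and $|x'|^2=1/m$, one obtains $m\,|\hat{x}'|^2 = \lambda^2\big(|\hat{x}-x|^2\big)^2$. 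Hence $\tfrac{dt^2}{m}$ is the arc-length polarization of $\hat{x}$ (that is, $m|\hat{x}'|^2\equiv 1$) if and only if $|\hat{x}-x|^2\equiv \tfrac{1}{\lambda}$ or $|\hat{x}-x|^2\equiv -\tfrac{1}{\lambda}$ on all of $I$ — the single global sign being forced by continuity of $|\hat{x}-x|^2$ and connectedness of $I$.

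Next I would derive an evolution equation for the distance. Writing $w=\hat{x}-x$ and differentiating $|w|^2=w\bar w$ gives $\tfrac{d}{dt}|w|^2=2\langle w,\hat{x}'-x'\rangle$; substituting $\hat{x}'=\lambda w^2\overline{x'}$ and simplifying with $\overline{zw}=\bar z\,\bar w$ and $j^2=1$ yields
\[
	\frac{d}{dt}\,|\hat{x}-x|^2 \;=\; 2\big(\lambda\,|\hat{x}-x|^2-1\big)\,\langle\,\hat{x}-x,\,x'\,\rangle .
\]
Therefore $h:=\lambda|\hat{x}-x|^2-1$ satisfies the linear homogeneous ODE $h'=2\lambda\langle\hat{x}-x,x'\rangle\,h$ with continuous coefficient, so $h(t_*)=0$ at a single point forces $h\equiv 0$ on $I$. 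This is exactly the ``Furthermore'' statement ($|\hat{x}-x|^2\equiv\tfrac{1}{\lambda}$), and combined with the relation $m|\hat{x}'|^2=\lambda^2(|\hat{x}-x|^2)^2$ from the first step it gives the implication ``$|\hat{x}-x|^2=\tfrac{1}{\lambda}$ at one point $\Rightarrow$ $\hat{x}$ has arc-length polarization $\tfrac{dt^2}{m}$''.

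For the converse, assume $\hat{x}$ has arc-length polarization $\tfrac{dt^2}{m}$. By the first step, $|\hat{x}-x|^2\equiv\varepsilon/\lambda$ for a fixed $\varepsilon\in\{+1,-1\}$, and the only remaining point is to exclude $\varepsilon=-1$. Feeding this constant value into the ODE above gives $0=2(\varepsilon-1)\langle\hat{x}-x,x'\rangle$, so $\varepsilon=-1$ would force $\langle\hat{x}-x,x'\rangle\equiv 0$; then $\hat{x}-x$ is everywhere proportional to the normal $n=(x_2',x_1')$, and plugging this back into $\hat{x}'=\lambda(\hat{x}-x)^2\overline{x'}$ collapses to $\hat{x}'=x'$, whence $x''\equiv 0$, i.e. $x$ and $\hat{x}$ are parallel spacelike straight lines. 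Ruling out this degenerate configuration (or excluding it by hypothesis) is exactly where the argument must be handled with care, and is the one step with no Euclidean analogue: in $\mathbb{R}^2$ the bound $|\hat{x}-x|^2\ge 0$ makes $\varepsilon=+1$ automatic, whereas in $\mathbb{C}'$ it has to be extracted from the structure of the Darboux equation. Once $\varepsilon=+1$ is in hand, $|\hat{x}-x|^2\equiv\tfrac{1}{\lambda}$, completing the proof.
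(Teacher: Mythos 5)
Your computational core is correct and, unlike the paper itself (which offers no written argument and simply defers to the Euclidean proof in \cites{MR4287306} ``by changing the metric''), it is self-contained: the identity $m|\hat{x}'|^2=\lambda^2\bigl(|\hat{x}-x|^2\bigr)^2$, obtained by inverting $x'$ in \eqref{Equation:CrossRatio} and using multiplicativity of $|\cdot|^2$ in $\mathbb{C}'$, and the evolution equation $\tfrac{d}{dt}|\hat{x}-x|^2=2\bigl(\lambda|\hat{x}-x|^2-1\bigr)\langle\hat{x}-x,x'\rangle$ are both right, and together they do prove the ``furthermore'' claim and the implication from $|\hat{x}-x|^2(t_*)=\tfrac1\lambda$ to simultaneous arc-length polarization.

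The genuine gap is the converse, and you have not closed it: you reduce it to excluding $|\hat{x}-x|^2\equiv-\tfrac1\lambda$, correctly see that this forces $\langle\hat{x}-x,x'\rangle\equiv0$ and then $\hat{x}'\equiv x'$, but then leave the exclusion to ``care'' or to an unstated hypothesis. In fact it cannot be extracted from the hypotheses as given: for $\lambda>0$ take $x(t)=(t,0)$, $m\equiv1$, $\hat{x}(t)=(t,1/\sqrt{\lambda})$. Then $x'\hat{x}'=1$ and $\tfrac{\lambda}{m}(\hat{x}-x)^2=\lambda\,(j/\sqrt{\lambda})^2=1$, so this is a Darboux pair in the sense of Definition~\ref{Definition:SpacelikeDarbouxCrossRatio} (the paper's remark only excludes $\hat{x}=x$); both curves are arc-length polarized by $m$, yet $|\hat{x}-x|^2=-\tfrac1\lambda$. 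So the ``only if'' direction requires either an extra assumption (e.g.\ $\hat{x}'\not\equiv x'$, ruling out the translated straight-line pairs) or a weakened conclusion such as $\lambda^2|\hat{x}-x|^4=1$; your argument is incomplete precisely at the step you flagged, and no argument from the stated hypotheses can complete it. Two smaller corrections: $\hat{x}'\equiv x'$ only forces the trace of $x$ to be a line (it does not give $x''\equiv0$, since the parametrization need not be affine), and the claim that positivity of $|\hat{x}-x|^2$ makes the Euclidean case automatic is only true for $\lambda>0$ --- the same parallel-line degeneracy occurs in $\mathbb{R}^2$ when $\lambda<0$, which suggests the cited Euclidean lemma also carries an implicit nondegeneracy assumption that ``changing the metric'' alone does not address.
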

The above lemma can be proven simply by changing the metric in the proof in \cites{MR4287306}.

\subsubsection{Singularities and blow up}
For a Darboux transformation of a spacelike curve, we take a polarization. We say a curve $x(t):I \rightarrow \mathbb{R}^{1,1}$ has a singular point at $t=0$ if $x'(t)=0$ at $t=t_*$. Since the polarization $m$ does not blow up and $\hat{x}-x \neq 0$ in the domain, we have the same fact as for Darboux transformations in $\mathbb{R}^2$:

\begin{theorem}
	If the curve $x(t)$ is spacelike, then its Darboux transformation $\hat{x}(t)$ does not have singular points.
\end{theorem}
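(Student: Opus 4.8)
The plan is to argue by contradiction. Suppose that the Darboux transformation $\hat{x}$ has a singular point at some $t_* \in I$, i.e.\ $\hat{x}'(t_*)=0$, and deduce $\hat{x}(t_*)=x(t_*)$, which is excluded by hypothesis. Concretely, first I would substitute $\hat{x}'(t_*)=0$ into the defining equation~\eqref{Equation:CrossRatio}: the left-hand side $x'(t_*)\hat{x}'(t_*)$ then vanishes in $\mathbb{C}'$, so that
\[
\frac{\lambda}{m(t_*)}\bigl(\hat{x}(t_*)-x(t_*)\bigr)^2 = 0 .
\]

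The next step is purely algebraic. The coefficient $\lambda/m(t_*)$ is a nonzero real number: $\lambda \in \mathbb{R}^{\times}$ by Definition~\ref{Definition:SpacelikeDarbouxCrossRatio}, and $m(t_*)\in\mathbb{R}^{\times}$ since the polarization is an $\mathbb{R}^{\times}$-valued function on $I$, hence finite and nonzero at the interior point $t_*$ (it ``does not blow up''). A nonzero real is invertible in $\mathbb{R}\subset\mathbb{C}'$ and therefore is not a zero divisor, so it may be cancelled, leaving $\bigl(\hat{x}(t_*)-x(t_*)\bigr)^2=0$. Writing $\hat{x}(t_*)-x(t_*)=a+jb$ with $a,b\in\mathbb{R}$, one has $(a+jb)^2=(a^2+b^2)+2ab\,j$, so $\bigl(\hat{x}(t_*)-x(t_*)\bigr)^2=0$ forces $a^2+b^2=0$, i.e.\ $a=b=0$. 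Hence $\hat{x}(t_*)=x(t_*)$, contradicting the standing assumption (see the remark following Definition~\ref{Definition:SpacelikeDarbouxCrossRatio}) that $\hat{x}\neq x$ on the domain. Thus $\hat{x}'(t)\neq 0$ for every $t\in I$.

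The only point where any care is needed, and thus the ``obstacle'', is the presence of zero divisors: even though $\hat{x}-x$ is allowed to lie in the set $A$ at a degenerate point, its square is still nonzero there, since $(a\pm ja)^2=2a^2(1\pm j)\neq 0$ for $a\neq 0$. In other words, the squaring map on $\mathbb{C}'$ kills only the origin, which is precisely what lets the Euclidean argument carry over verbatim; and the hypothesis that $m$ takes values in $\mathbb{R}^{\times}$ (rather than vanishing anywhere) is what keeps the right-hand side of~\eqref{Equation:CrossRatio} from degenerating. One may also phrase the whole argument without contradiction: since $x$ is spacelike, $x'$ is never a zero divisor, so~\eqref{Equation:CrossRatio} solves for $\hat{x}'=\frac{\lambda}{m}\,\overline{x'}\,|x'|^{-2}\,(\hat{x}-x)^2$, from which it is transparent that $\hat{x}'$ vanishes only where $\hat{x}=x$. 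I do not expect any difficulty beyond this bookkeeping.
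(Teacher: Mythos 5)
Your argument is correct and is essentially the paper's own reasoning made explicit: the paper justifies the theorem by the one-line remark that $m$ does not blow up and $\hat{x}-x\neq 0$ on the domain, which is exactly your cancellation of $\lambda/m$ in equation~\eqref{Equation:CrossRatio} together with the observation that squaring in $\mathbb{C}'$ annihilates only the origin (so even a zero divisor $\hat{x}-x$ has nonzero square). Your explicit check $(a+jb)^2=(a^2+b^2)+2abj$ is a welcome detail the paper leaves implicit, but it is the same approach.
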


Next we would like to observe the behaviors of Darboux transformations at infinity.
\begin{definition}
	We say a curve $x(t):I \rightarrow \mathbb{R}^{1,1}$ blows up at $t=t_*$ if $\lim_{t \rightarrow t_*}\mathcal{P}(x(t))$ is at the boundary of the Penrose diagram, and we define the direction of $x'(t)$ at infinity as $\lim_{t \rightarrow t_*}\mathcal{P}(x(t))'$, which we can allow to be infinite as we are interested only in the direction.
\end{definition}
\begin{definition}
	Let two curves $x(t)$ and $y(t)$ be at infinity as $t  \rightarrow t_*$. They are said to be tangent at infinity if, after some inversion, $x$ and $y$ are tangent at a finite point.
\end{definition}

Before going to the theorems, we prove two preparatory results, which provide intuition on the Penrose diagram.

\begin{lemma}\label{Lemma:NullInfinity}
	Any nonlinear circle in the Penrose diagram blows up at the null infinity.
\end{lemma}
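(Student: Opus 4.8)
The plan is to pass to null coordinates, in which a nonlinear circle becomes a rectangular hyperbola, and then feed its unbounded branches directly into the map $\mathcal{P}$ from Definition~\ref{Definition:PenroseDiagram}. Recall that a nonlinear circle in $\mathbb{R}^{1,1}$ is a level set $\{x \in \mathbb{R}^{1,1} : |x-p|^2 = c\}$ for some center $p=(p_1,p_2)$ and some $c \in \mathbb{R}^\times$; the excluded value $c=0$ produces a pair of null lines, which together with the honest straight lines (the ``circles through infinity'') make up the linear circles. Writing $u=x_1+x_2$, $v=x_1-x_2$ and setting $u_0=p_1+p_2$, $v_0=p_1-p_2$, the defining equation becomes $(u-u_0)(v-v_0)=c$ with $c\neq 0$.

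Next I would analyze the ends of this hyperbola. Since $c\neq0$, no point of the circle satisfies $u=u_0$ and $v=v_0$ simultaneously, and solving gives $v=v_0+c/(u-u_0)$; hence along any branch of the circle that leaves every compact set, exactly one of $u,v$ tends to $\pm\infty$ while the other tends to a finite limit ($v_0$ or $u_0$, respectively). In particular the image under $\mathcal{P}$ does reach the boundary of the square, since one of $\arctan u$, $\arctan v$ tends to $\pm\pi/2$, so the circle blows up. Now recall from Definition~\ref{Definition:PenroseDiagram} that $\psi+\zeta=2\arctan u$ and $\psi-\zeta=2\arctan v$, and consider the branch with, say, $u\to+\infty$ and $v\to v_0$. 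Then $\psi+\zeta\to\pi$, so the limit point lies on the boundary, while $\psi-\zeta\to2\arctan v_0 \in(-\pi,\pi)$ \emph{strictly}, since $v_0$ is finite. Thus the limit is an interior point of the edge $\{\psi+\zeta=\pi\}$, i.e.\ a point of null infinity $\mathscr{I}$, and not a corner $I^0$ or $I^\pm$. The remaining three ends ($u\to-\infty$ with $v\to v_0$, and $u\to u_0$ with $v\to\pm\infty$) are handled identically and land one on each of the other three open edges.

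The computation itself is routine; the only thing to be careful about is the bookkeeping at the ends, namely confirming that the finite limit of the ``other'' null coordinate lies strictly inside $(-\pi/2,\pi/2)$ after applying $\arctan$, which is precisely what distinguishes the open null-infinity edges from the corners $I^0$, $I^\pm$. I do not expect a genuine obstacle here, as this lemma is meant only to build intuition for the Penrose picture before the main theorems.
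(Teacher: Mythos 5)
Your argument is correct, and it reaches the same underlying mechanism as the paper --- one null coordinate diverges while the other stays finite, so after $\arctan$ the image hits an interior point of an edge of the square --- but by a somewhat different route. The paper proves the lemma by explicit parametrization and case analysis: it writes a timelike circle as $x(t)=(r\sec t + c_1,\ r\tan t + c_2)$, notes $\sec t+\tan t\to\infty$ while $\sec t-\tan t\to 0$ as $t\to\pi/2$, and then repeats the argument mutatis mutandis for the spacelike parametrization $(r\tan t+c_1,\ r\sec t+c_2)$ and for a lightlike case $(t+c_1,\ t+c_2)$. You instead work with the implicit equation in null coordinates, $(u-u_0)(v-v_0)=c$ with $c\neq 0$, which treats the timelike and spacelike circles uniformly, accounts for all four ends at once, and makes the ``interior point of an open edge, not a corner'' conclusion transparent via $2\arctan v_0\in(-\pi,\pi)$; this is cleaner and slightly more general than checking representative parametrizations. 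One small discrepancy to flag: the paper's proof also includes the degenerate lightlike circle (a null line), which you deliberately exclude by classifying the $c=0$ locus as linear. That exclusion is defensible given the word ``nonlinear'' in the statement, and it does not affect correctness, but note that the same one-line computation (one null coordinate constant, the other tending to $\pm\infty$) covers null lines as well, and that case is implicitly wanted later, since the common circle congruence of a Lorentz--Darboux pair can degenerate to a lightlike circle at degenerate points.
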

\begin{proof}
	Without loss of generality, in the timelike case we can just consider a timelike circle given by
	\[x(t)=(r \sec(t)+c_1, r \tan(t)+c_2)\]
	for some finite $c_1, c_2 \in \mathbb{R}$ and $r \in \mathbb{R}_{>0}$.
	(Note that the "nonlinear circle" in the Lorentz-Minkowski plane is just a special case of hyperbolas.)
	Then $x(t)$ represents a timelike circle with center $(c_1, c_2)$ and radius $r$.\\
	Now, in the Penrose diagram, we have
	\[\mathcal{P}(x)=(\arctan(\theta_+)+\arctan(\theta_-), \arctan(\theta_+)-\arctan(\theta_-)),\]
	where
	\[\theta_\pm=c_1 \pm c_2+r \sec(t)\pm r \tan(t).\]

	This curve $x(t)$ blows up as $t \rightarrow \pi/2$ for example, and the next equations hold:
	\[\lim_{t \rightarrow \pi/2}(\sec(t)+\tan(t))= \infty \quad \text{and} \quad \lim_{t \rightarrow \pi/2}(\sec(t)-\tan(t))= 0.\]
	This means that $\mathcal{P}(x(t))$ converges to an interior point of the upper right boundary in the Penrose diagram. We can consider $t \rightarrow \pi/2 + k\pi$ for $k \in \mathbb{Z}$ in the same manner. Therefore $\mathcal{P}(x(t))$ blows up at the null infinity.
	Analogous arguments hold for lightlike and spacelike circles, and analogous $c_1, c_2$ and $r$ appear in the spacelike case:
	\[x(t)=\left(r \tan(t)+c_1, r \sec(t)+c_2\right).\]
	In the lightlike case we have
	\[x(t)=(t+c_1, t+c_2).\]
\end{proof}

The following corollary to Lemma~\ref{Lemma:NullInfinity} is helpful for giving us a better sense of the Penrose diagram, although it is not directly related to the proofs of the upcoming theorems, but is morally connected to Theorem~\ref{Theorem:NullInfinityOrthogonally}.

\begin{corollary}\label{Cororlaryy:Orthogonally}
	Suppose that a smooth non-singular curve $x(t)$ blows up as $t \rightarrow t_*$. If $\lim_{t \rightarrow t_*}|x'(t)|^2=0$, then $x(t)$ blows up at the null infinity in a lightlike direction in the Penrose diagram as $t \rightarrow t_*$.
\end{corollary}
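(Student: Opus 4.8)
The plan is to push the statement into the Penrose square using the conformality of $\mathcal{P}$. Write $\gamma(t):=\mathcal{P}(x(t))=(\psi(t),\zeta(t))$; by hypothesis $\gamma(t)$ converges as $t\to t_{*}$ to a point $p$ on the boundary of the square. The conformal factor $\Omega^{-2}=4\cos^{2}\frac{\psi+\zeta}{2}\cos^{2}\frac{\psi-\zeta}{2}$ extends continuously to the closed square and vanishes exactly on its boundary, so $\Omega^{-2}(\gamma(t))\to\Omega^{-2}(p)=0$. Pulling the identity $dx^{2}-dy^{2}=\Omega^{2}(d\psi^{2}-d\zeta^{2})$ back along the curve gives $|x'(t)|^{2}=\Omega^{2}(\gamma(t))\,\bigl(\psi'(t)^{2}-\zeta'(t)^{2}\bigr)$, hence
\[
\psi'(t)^{2}-\zeta'(t)^{2}=\Omega^{-2}(\gamma(t))\,|x'(t)|^{2}\longrightarrow 0\qquad(t\to t_{*}),
\]
both factors on the right tending to $0$. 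Thus the direction of $x$ at infinity, $\lim_{t\to t_{*}}\gamma'(t)$, lies on the lightcone $\psi'=\pm\zeta'$; that is, $x$ blows up in a lightlike direction in the Penrose diagram.

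It remains to locate $p$. Pass to null coordinates $u=x_{1}+x_{2}$, $v=x_{1}-x_{2}$, so that $\psi\pm\zeta=2\arctan u,\,2\arctan v$ and $|x'|^{2}=u'v'$; the boundary of the square is the union of the four \emph{open} null edges $\{2\arctan u=\pm\pi\}$ and $\{2\arctan v=\pm\pi\}$ together with the four corners $I_{s},I_{t}$ at which two such edges meet. Since $\gamma$ converges, $\arctan u(t)$ and $\arctan v(t)$ converge, and $p\in\partial(\text{square})$ means at least one of the two limits equals $\pm\frac{\pi}{2}$, i.e.\ at least one of $u,v$ runs off to $\pm\infty$. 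If exactly one does, then $p$ is an interior point of the corresponding edge, i.e.\ $p\in\mathscr{I}$, and we are done; so the whole matter reduces to ruling out the case that $u\to\pm\infty$ and $v\to\pm\infty$ simultaneously (which would put $p$ at a corner). Here the hypothesis $|x'|^{2}=u'v'\to 0$ must be used in earnest: with $\alpha:=\frac{d}{dt}\arctan u$, $\beta:=\frac{d}{dt}\arctan v$ one has $u'v'=\alpha\beta(1+u^{2})(1+v^{2})$ and $\alpha\beta=\tfrac14(\psi'^{2}-\zeta'^{2})\to 0$ by the first paragraph, while $\int^{t_{*}}\alpha$ and $\int^{t_{*}}\beta$ converge and $(1+u^{2})(1+v^{2})\to\infty$; the aim is to contradict the simultaneous blowup of $u$ and $v$ by comparing the rates at which $\arctan u,\arctan v\to\pm\frac\pi2$ (controlled by $\int_{t}^{t_{*}}\alpha$, $\int_{t}^{t_{*}}\beta$, via $\tfrac{1}{1+u^{2}}\sim(\pm\tfrac\pi2-\arctan u)^{2}$) against the decay of $\alpha\beta$.

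I expect this corner-exclusion to be the main obstacle: it is exactly the point at which the geometric slogan ``a curve whose velocity becomes null can only escape toward the null part of the boundary'' must be turned into an estimate, whereas the first paragraph and the reduction above are routine once the corner case is dispatched. I would isolate it as a short one-variable lemma about the functions $\arctan u(t),\arctan v(t)$ subject to $\alpha\beta\to 0$ and $u,v\to\pm\infty$, and try to close it with an elementary comparison; alternatively one could attempt to feed the osculating circles (or the limiting tangent null line) of $x$ near $t_{*}$ into Lemma~\ref{Lemma:NullInfinity}, but since $x$ need not be a circle some genuinely analytic input into the blowup rate seems hard to avoid.
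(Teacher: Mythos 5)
Your first paragraph is, in substance, the paper's entire proof. The paper writes $x=(f,g)$ and computes $|(\mathcal{P}(\sigma(x)))'|^2=4\frac{f'^2-g'^2}{\Theta}$ with $\Theta=\left(1+(f+g)^2\right)\left(1+(f-g)^2\right)$, which is exactly your quantity $\Omega^{-2}(\gamma)\,|x'(t)|^2$ (the inversion $\sigma$ does not change this norm, since in null coordinates it just sends $(u,v)\mapsto(1/v,1/u)$); it then concludes at once, from conformality of $\mathcal{P}$ and $\sigma$, that the blowup is at null infinity in a lightlike direction. In particular, the corner exclusion that you single out as the real content is not carried out in the paper at all: its proof stops exactly where your first paragraph stops (and shares with it the tacit assumption that a limiting direction exists at all).

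The genuine gap is therefore in the part you left open, and it is worse than ``hard'': the one-variable lemma you propose (that $u'v'\to 0$ together with convergence of $\arctan u$, $\arctan v$ forbids $u$ and $v$ from escaping to infinity simultaneously) is false without further hypotheses. Take $t_*<\infty$ and choose smooth $u',v'>0$ so that on successive intervals of length about $n^{-2}$ one has $(u',v')\approx(n,\,n^{-3})$ and then $\approx((n+1)^{-3},\,n+1)$, staggering the transitions so that the product $u'v'$ never exceeds about $n^{-2}$. The resulting curve is smooth, non-singular, even spacelike, with $|x'|^2=u'v'\to 0$, yet $u\to\infty$ and $v\to\infty$ in finite time, so $\mathcal{P}(x(t))$ converges to the spatial-infinity corner rather than to an interior point of $\mathscr{I}$. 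Hence no rate-comparison estimate of the kind you describe can exist; pinning down the location requires extra input, e.g.\ the existence of the limit $\lim_{t\to t_*}\mathcal{P}(x(t))'$, or the Darboux-pair setting of Theorem~\ref{Theorem:NullInfinityOrthogonally}, where the location at null infinity is obtained not from $|x'|^2\to 0$ but from the common circle congruence together with Lemma~\ref{Lemma:NullInfinity}. So the portion of your argument that you completed coincides with the paper's proof, while the obstacle you correctly identified is absent from the paper and cannot be closed along the route you sketch.
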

\begin{proof}
	Without loss of generality, we write a smooth curve $x(t)=(f(t), g(t))$ for some smooth functions $f(t)$ and $g(t)$.
	Since $\lim_{t \rightarrow t_*}|x'(t)|^2=0$, we have
	\[\lim_{t \rightarrow t_*}\left(f'(t)^2-g'(t)^2\right)=0.\]
	Considering the inversion $\sigma(x)=\frac{x}{|x|^2}$, a direct computation leads to
	\[|(\mathcal{P}(\sigma(x(t))))'|^2=4\frac{f'^2-g'^2}{\Theta},\]
	where
	\[\Theta=\left(1+(f+g)^2\right)\left(1+(f-g)^2\right).\]

	Since $\lim_{t \rightarrow t_*}\Theta\neq 0$, we have $\lim_{t \rightarrow t_*}|(\mathcal{P}(\sigma(x(t))))'|^2=0$. Furthermore, $\mathcal{P}$ and $\sigma$ are conformal maps, therefore we conclude that $x(t)$ blows up at the null infinity in a lightlike direction in the Penrose diagram as $t\rightarrow t_*$.
\end{proof}
We can now prove two theorems about Darboux transformations as follows:

\begin{theorem}
	Let $\hat{x}$ be a Darboux transformation of a finite spacelike curve $x$ and suppose that $\hat{x}$ blows up as $t \rightarrow t_*$.
	If the common circle congruence is a nonlinear circle at $t=t_*$, then $\mathcal{P}(\hat{x}(t))$ blows up at the null infinity. Otherwise, the common circle congruence is a line and $\mathcal{P}(\hat{x})$ blows up at the spatial infinity.
\end{theorem}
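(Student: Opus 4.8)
The plan is to read the conclusion off the common circle congruence, working in null coordinates. For each $t$ near $t_*$ both $x(t)$ and $\hat x(t)$ lie on the common circle $C(t)$ of Lemma~\ref{Lemma:Radius}, and since $C(t)$ is tangent to the spacelike curve $x$ at $x(t)$, its tangent type is spacelike throughout, so $C(t)$ is a spacelike circle or a spacelike line; the hypothesis of the theorem is that the limiting configuration $C(t_*)$ is a circle (possibly a degenerate lightlike one) or a line. I would first record that $\hat x$ is itself spacelike: taking the split-complex modulus of \eqref{Equation:CrossRatio} and using $|zw|^2=|z|^2|w|^2$ gives
\[
|\hat x'|^2=\frac{\lambda^2}{m^2\,|x'|^2}\,|\hat x-x|^4\ge 0 ,
\]
with equality only at degenerate points, since $x$ is finite and spacelike and $m$ is nonvanishing. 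In particular $|\hat x_1'|\ge|\hat x_2'|$, and because $\hat x$ has no singular points $\hat x_1'$ never vanishes, hence is of constant sign near $t_*$.

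Now put $u=\hat x_1+\hat x_2$ and $v=\hat x_1-\hat x_2$. Then $u'=\hat x_1'+\hat x_2'$ and $v'=\hat x_1'-\hat x_2'$ both carry the sign of $\hat x_1'$, so $u$ and $v$ are monotone near $t_*$ and have limits in $[-\infty,+\infty]$. Since $\mathcal P(\hat x)=(\arctan u+\arctan v,\ \arctan u-\arctan v)$, with $\psi+\zeta=2\arctan u$ and $\psi-\zeta=2\arctan v$, the blow-up of $\hat x$ means exactly that at least one of $\lim u,\ \lim v$ equals $\pm\infty$. Reading off the types of infinity from Definition~\ref{Definition:PenroseDiagram}: if both $u$ and $v$ diverge then, being monotone together, they tend to infinities of the same sign, so $\hat x$ blows up at the spatial infinity; if exactly one of them diverges, $\hat x$ blows up at an interior point of a null-infinity edge; and timelike infinity, which would require $u$ and $v$ to diverge with opposite signs, is excluded by their common monotonicity.

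To match these two alternatives with the statement, I would use the radius $\xi(t)=\dfrac{|\hat x-x|^2}{2\langle \hat x-x,n\rangle}$ of Lemma~\ref{Lemma:Radius}. In null coordinates, with $x(t),n(t)$ bounded, one has $|\hat x-x|^2=(v(\hat x)-v(x))(u(\hat x)-u(x))$ and $2\langle\hat x-x,n\rangle=(u(\hat x)-u(x))(n_1-n_2)+(v(\hat x)-v(x))(n_1+n_2)$. If exactly one of $u(\hat x),v(\hat x)$ diverges, both numerator and denominator are asymptotically constant multiples of that divergent coordinate, so $\xi(t)$ tends to a finite limit and $C(t_*)$ is a nonlinear circle (or a degenerate lightlike one in the borderline case $\xi(t)\to0$); if both diverge, the numerator grows like their product while the denominator grows only linearly, so $\xi(t)\to\infty$ and $C(t_*)$ is a line. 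This identifies the case where $C(t_*)$ is a nonlinear circle with blow-up at the null infinity, and the case where $C(t_*)$ is a line with blow-up at the spatial infinity, which is the assertion. Alternatively, once one knows $\hat x(t)$ runs off along $C(t_*)$, the conclusion also follows geometrically from Lemma~\ref{Lemma:NullInfinity} and the spacelike-line computation inside its proof, with Corollary~\ref{Cororlaryy:Orthogonally} covering the subcase $|\hat x-x|^2\to0$.

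The step I expect to be the main obstacle is making this asymptotic matching rigorous. One has to rule out exceptional cancellations — the finite limit of $v(\hat x)-v(x)$ being exactly $0$, or the normal degenerating so that $n_1\pm n_2$ vanishes (which in fact cannot happen since $x$ is spacelike) — and, if one prefers the geometric route via Lemma~\ref{Lemma:NullInfinity}, one must check that for $t\ne t_*$, where $C(t)$ is typically still a genuine spacelike circle of large but finite radius with its own ends already at null infinity, the point $\hat x(t)$ is not drifting toward one of those far ends but into the regime dictated by $C(t_*)$. The monotonicity argument of the second paragraph is designed precisely to bypass this, controlling $u(\hat x)$ and $v(\hat x)$ directly on a neighborhood of $t_*$ rather than through the moving circle.
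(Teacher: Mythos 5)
Your argument is correct, but it is a genuinely different proof from the one in the paper. The paper argues entirely through the moving common circle congruence: by Lemma~\ref{Lemma:NullInfinity}, circles with uniformly bounded center and radius stay away from the spatial and timelike infinities, so $\hat{x}$, lying on them, must blow up at null infinity, while an unbounded radius forces the congruence to degenerate to a line through spatial infinity and $\hat{x}$, ``lying in nearby circles,'' to blow up there. You instead control $\hat{x}$ directly: taking $|\cdot|^2$ of \eqref{Equation:CrossRatio} to see that $\hat{x}$ is spacelike away from degenerate points, you get weak monotonicity of the null coordinates $u(\hat{x}),v(\hat{x})$, so their limits exist in $[-\infty,+\infty]$, blow-up means at least one diverges, timelike infinity is excluded outright, and the null-versus-spatial dichotomy is read off from whether one or both diverge; the identification with the circle/line alternative then comes from the radius formula of Lemma~\ref{Lemma:Radius} written in null coordinates, using $n_1\pm n_2=x_2'\pm x_1'\neq 0$ for spacelike $x$. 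What your route buys is precisely the rigor the paper's proof lacks at its weakest step (``$\mathcal{P}(\hat{x})$ lies in nearby circles for $t$ close to $t_*$''), plus existence of the boundary limit for free and an explicit exclusion of timelike infinity; what the paper's route buys is brevity and a picture that transfers verbatim to Theorem~\ref{Theorem:NullInfinityOrthogonally}. Your flagged borderline case $\xi\to 0$ (degenerate lightlike congruence with blow-up still at null infinity) is real, but it is the same terminological fuzziness the paper itself carries — indeed it is exactly the situation occurring in Theorem~\ref{Theorem:NullInfinityOrthogonally} — so treating it as part of the ``nonlinear circle'' alternative, as you do, is consistent with the paper's usage.
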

\begin{proof}
	For any Darboux pair, we have the common circle congruence. From Lemma~\ref{Lemma:NullInfinity}, as long as $c_1, c_2$ and $r$ are finite with uniform bounds, the entirety of the common circle congruence lives in the Penrose diagram and is bounded away from the spatial and timelike infinities. Therefore, Lemma~\ref{Lemma:NullInfinity} implies that $\mathcal{P}(\hat{x})$ blows up at the null infinity.
	When $r$ is not bounded at $t_*$, the common circle congruence becomes a line which meets the spatial infinity. Because $\mathcal{P}(\hat{x})$ lies in nearby circles for $t$ close to $t_*$, it follows that $\mathcal{P}(\hat{x})$ also blows up at the spatial infinity.
	Thus, the theorem has been proven.
\end{proof}

\begin{theorem}\label{Theorem:NullInfinityOrthogonally}
	Let $x, \hat{x}$ be a Darboux pair such that $x$ and $\hat{x}$ are simultaneously arc-length polarized. If $\hat{x}$ blows up as $t \rightarrow t_*$, then $\mathcal{P}(\hat{x})$ blows up at the null infinity in the non-orthogonal lightlike direction as $t \rightarrow t_*$.
\end{theorem}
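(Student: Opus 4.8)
The plan is to use the rigidity that simultaneous arc-length polarization imposes on the connecting vector $w:=\hat{x}-x$, together with a computation carried out in null coordinates, where split-complex multiplication is diagonal. By the preceding lemma the hypothesis forces $|w|^{2}=|\hat{x}-x|^{2}$ to be the nonzero constant $\tfrac{1}{\lambda}$; in particular $w$ is never a zero divisor, so the pair has no degenerate points. Since $x$ is spacelike, $x'$ is invertible, and multiplying the defining equation~\eqref{Equation:CrossRatio} by $\overline{x'}$ and using the arc-length identity $m|x'|^{2}=1$ puts it in the convenient form
\[
\hat{x}'=\lambda\, w^{2}\,\overline{x'}.
\]
Writing $z_{u}:=z_{1}+z_{2}$, $z_{v}:=z_{1}-z_{2}$ for the null coordinates of $z=z_{1}+jz_{2}$, the product becomes diagonal, $(pq)_{u}=p_{u}q_{u}$ and $(pq)_{v}=p_{v}q_{v}$, while conjugation interchanges the two components; hence $(\hat{x}')_{u}=\lambda w_{u}^{2}(x')_{v}$ and $(\hat{x}')_{v}=\lambda w_{v}^{2}(x')_{u}$, with $w_{u}w_{v}=|w|^{2}=\tfrac{1}{\lambda}$.

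The first step is to locate the blow-up point. Since $\hat{x}$ blows up while $x$ stays bounded, $w=\hat{x}-x$ has unbounded Euclidean length; but $w_{u}w_{v}$ is a fixed nonzero constant, so exactly one of $w_{u},w_{v}$ tends to $\pm\infty$ and the other to $0$. Swapping the two null coordinates if necessary (an isometry that acts on the Penrose diagram by $(\psi,\zeta)\mapsto(\psi,-\zeta)$), I may assume $w_{u}\to\pm\infty$ and $w_{v}\to0$. Then $\hat{x}_{u}=x_{u}+w_{u}\to\pm\infty$, whereas $\hat{x}_{v}=x_{v}+w_{v}$ converges to a finite limit $v_{*}$. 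Since $\psi+\zeta=2\arctan\hat{x}_{u}\to\pm\pi$ and $\psi-\zeta=2\arctan\hat{x}_{v}\to 2\arctan v_{*}\in(-\pi,\pi)$, the curve $\mathcal{P}(\hat{x})$ converges to an interior point of an edge $\{\psi+\zeta=\pm\pi\}$ — that is, to the null infinity, in agreement with the preceding theorem.

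It then remains to identify the limiting direction. Let $\Psi_{\pm}:=\psi\pm\zeta$ be the null coordinates of the Penrose square; then $d\psi^{2}-d\zeta^{2}=d\Psi_{+}\,d\Psi_{-}$, so $\partial_{\Psi_{+}},\partial_{\Psi_{-}}$ are the two lightlike directions and $\langle\partial_{\Psi_{+}},\partial_{\Psi_{-}}\rangle=\tfrac{1}{2}\neq0$. Differentiating $\Psi_{+}=2\arctan\hat{x}_{u}$ and $\Psi_{-}=2\arctan\hat{x}_{v}$, and substituting $w_{v}^{2}=1/(\lambda^{2}w_{u}^{2})$, gives
\[
\Psi_{+}'=\frac{2\lambda\,w_{u}^{2}\,(x')_{v}}{1+(x_{u}+w_{u})^{2}},\qquad
\Psi_{-}'=\frac{2\,(x')_{u}}{\lambda\,w_{u}^{2}\,\bigl(1+\hat{x}_{v}^{2}\bigr)}.
\]
Since $x$ is spacelike near $t_{*}$ we have $(x')_{u}(x')_{v}=|x'|^{2}>0$, so $(x')_{u},(x')_{v}$ stay bounded and bounded away from $0$; together with $w_{u}\to\pm\infty$ and $x_{u},\hat{x}_{v}$ bounded, this gives $\Psi_{+}'\to 2\lambda\,(x')_{v}(t_{*})\neq0$ and $\Psi_{-}'\to0$. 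Hence $\mathcal{P}(\hat{x})'$ points, in the limit, along the lightlike direction $\partial_{\Psi_{+}}$, while the edge being approached is a level set of $\Psi_{+}$ and so runs along $\partial_{\Psi_{-}}$; since $\langle\partial_{\Psi_{+}},\partial_{\Psi_{-}}\rangle\neq0$, the blow-up direction is lightlike but not orthogonal to the null infinity, which is the assertion.

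The step I expect to require the most care is this final passage to the limit — i.e. guaranteeing that $x$ remains bounded with $x'$ spacelike and $(x')_{u},(x')_{v}$ bounded away from $0$ as $t\to t_{*}$. This is automatic when $t_{*}$ is interior to the domain of $x$, a case which is fully compatible with $\hat{x}$ blowing up there: by $\hat{x}'=\lambda w^{2}\overline{x'}$ the velocity $\hat{x}'$ becomes nearly lightlike, with bounded $|\hat{x}'|^{2}=1/m$ but unbounded Euclidean length, so $\hat{x}$ can still escape to infinity in finite parameter time. The same argument works whenever $x$ extends to a smooth spacelike curve up to $t_{*}$; a genuinely degenerate boundary situation ($t_{*}=\pm\infty$, or $|x'|^{2}\to 0$) would instead be handled by estimating the ratio $\Psi_{-}'/\Psi_{+}'$ directly, which still tends to $0$ — hence still forces the direction $\partial_{\Psi_{+}}$ — provided $x'$ does not degenerate faster than the blow-up rate of $w_{u}$.
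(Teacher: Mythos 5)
Your proof is correct (modulo the same implicit hypothesis the paper itself uses, namely that $x$ stays finite and smoothly spacelike up to $t_*$, as in the preceding theorem's ``finite spacelike curve''), but it follows a genuinely different route from the paper. The paper argues geometrically through the common circle congruence: it uses Lemma~\ref{Lemma:Radius} and $|\hat{x}-x|^2\equiv 1/\lambda$ to show the radius $\xi\to 0$, invokes Lemma~\ref{Lemma:NullInfinity} to place the blowup at null infinity, gets the lightlike direction from conformality of $\mathcal{P}$ and the inversion $\sigma$ via $|\mathcal{P}(\sigma(\hat{x}))'|^2\to 0$ (using $|\hat{x}'|^2=|x'|^2$), and finally rules out a tangential approach by a contradiction with the finiteness of the circle centers $x-\tfrac{n}{\sqrt{-|n|^2}}\xi$. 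You instead diagonalize the problem in the idempotent (null) coordinates of $\mathbb{C}'$: rewriting \eqref{Equation:CrossRatio} under arc-length polarization as $\hat{x}'=\lambda w^2\overline{x'}$ with $w_uw_v=1/\lambda$, you deduce that exactly one null coordinate of $\hat{x}$ escapes (locating the limit in the interior of a null-infinity edge), and then compute the Penrose tangent directly, $\Psi_+'\to 2\lambda(x')_v(t_*)\neq 0$ and $\Psi_-'\to 0$, which yields the lightlike limiting direction and its transversality (non-orthogonality) to the edge in a single computation, together with the explicit limiting slope. What the paper's approach buys is the geometric picture of the shrinking circle congruence, consistent with its earlier lemmas; what yours buys is a shorter, more quantitative argument that handles location, direction, and non-tangency uniformly and makes the role of the constant $|\hat{x}-x|^2=1/\lambda$ completely transparent. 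The only points to tighten are small: ``$w$ has unbounded Euclidean length'' should be upgraded, via the paper's definition of blowup (existence of the boundary limit of $\mathcal{P}(\hat{x})$), to actual convergence of $\hat{x}_u,\hat{x}_v$ in the extended reals before concluding that exactly one of $w_u,w_v$ tends to $\pm\infty$; and the degenerate boundary scenarios you mention at the end ($t_*$ an endpoint with $|x'|^2\to 0$) are outside the spacelike, nonvanishing-polarization setting of the theorem, so you may simply exclude them rather than sketch an extra estimate.
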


\begin{remark}
	Note that the non-orthogonal lightlike direction will appear to meet the boundary of the Penrose diagram at a $90^{\circ}$ angle with respect to the Euclidean metric.
\end{remark}

\begin{proof}
	We suppose we have a simultaneously arc-length polarized Darboux pair, and so, without loss of generality, for the spectral parameter $\lambda \in \mathbb{R}^\times$, we have $|\hat{x}-x|^2 \equiv 1/\lambda$ and $|x'|^2=|\hat{x}'|^2$.

	From Lemma~\ref{Lemma:Radius}, we have
	\[\xi = \frac{|\hat{x}-x|^2}{2\langle \hat{x}-x,n \rangle}=\frac{1}{2\lambda\langle \hat{x}-x,n \rangle}.\]
	First, we assume $\lambda$ is positive.
	Since Lorentz-transformations preserve infinity points and length, we can rotate $n(t)$ so that $n(t_*)=(0, \alpha)$ for some finite $\alpha \in \mathbb{R}^{\times}$.

	Writing $\hat{x}=(\hat{x}_1, \hat{x}_2)$, we have
	\[\lim_{t \rightarrow t_*}\langle \hat{x}-x,n \rangle =-\alpha(\hat{x}_2-x_2).\]
	If $\hat{x}_2=x_2$, then $\hat{x}_2$ is finite and $|\hat{x}-x|^2=(\hat{x}_1-x_1)^2=1/\lambda<\infty$. However, this contradicts that $\hat{x}$ blows up, so we also have $\langle \hat{x}-x,n \rangle \neq 0$ at $t_*$. Therefore the common circle congruence is not a line at $t_*$.

	In the case of $\lambda$ is negative, since both $\hat{x}-x$ and $n$ are timelike, we have $\langle \hat{x}-x,n \rangle \neq 0$ at $t_*$ as well. Therefore by Lemma~\ref{Lemma:NullInfinity}, we find that $\hat{x}$ blows up at the null infinity.

	In particular, since $\hat{x}$ blows up at the null infinity as $t \rightarrow t_*$, both of $\hat{x}_1$ and $\hat{x}_2$ blow up, that is, we have
	\[\lim_{t \rightarrow t_*}\xi = \lim_{t \rightarrow t_*} -\frac{1}{2\lambda\langle \hat{x}-x,n \rangle} = 0.\]

	Next, without loss of generality, we suppose that $\mathcal{P}(\hat{x})$ blows up at the upper right null infinity as $t \rightarrow t_*$ so that $\hat{x}=(\hat{x}_1, \hat{x}_2)$ satisfies
	\[\lim_{t \rightarrow t_*}(\hat{x}_1+\hat{x}_2)=\infty \quad \text{and} \quad \lim_{t \rightarrow t_*}(\hat{x}_1-\hat{x}_2)\neq \pm \infty.\]

	With the $\mathbb{R}^{1,1}$ inversion $\sigma=\frac{x}{|x|^2}$, we have
	\begin{align*}
		|\mathcal{P}(\sigma(\hat{x}))'|^2 & = \frac{4|\hat{x}'|^2}{(1+(\hat{x}_1+\hat{x}_2)^2)(1+(\hat{x}_1-\hat{x}_2)^2)}\\
		& = \frac{4|x'|^2}{(1+(\hat{x}_1+\hat{x}_2)^2)(1+(\hat{x}_1-\hat{x}_2)^2)}\\
		& \xrightarrow{t \rightarrow t_*}0.
	\end{align*}

	Since both inversions and $\mathcal{P}$ are conformal, if $\hat{x}$ blows up as $t \rightarrow t_*$, then $\mathcal{P}(\hat{x})$ blows up at the null infinity in a lightlike direction as $t \rightarrow t_*$.

	The proof of non-orthogonal lightlike direction of $\mathcal{P}(\hat{x})$ remains.
	Now, we have
	\[\xi =-\frac{1}{2\lambda x_1'(\hat{x}_2-x_2)} \xrightarrow{t \rightarrow t_*}0.\]
	In the proof of Lemma~\ref{Lemma:Radius}, we had the center of a common circle congruence as
	\[x-\frac{n}{\sqrt{-|n|^2}}\xi.\]
	If $\hat{x}$ blows up at the upper right null infinity tangentially, then the center of common circle congruence also gets close to the infinity as $t \rightarrow t_*$.
	However, this contradicts that
	\[\lim_{t \rightarrow t_*}\left(x-\frac{n}{\sqrt{-|n|^2}}\xi\right)=x\]
	is finite. Thus we have proved the claim.
\end{proof}

\begin{figure}[hbt]
	\centering
	\begin{minipage}[b]{0.35\columnwidth}
		\centering
		\includegraphics[keepaspectratio, width=\columnwidth]{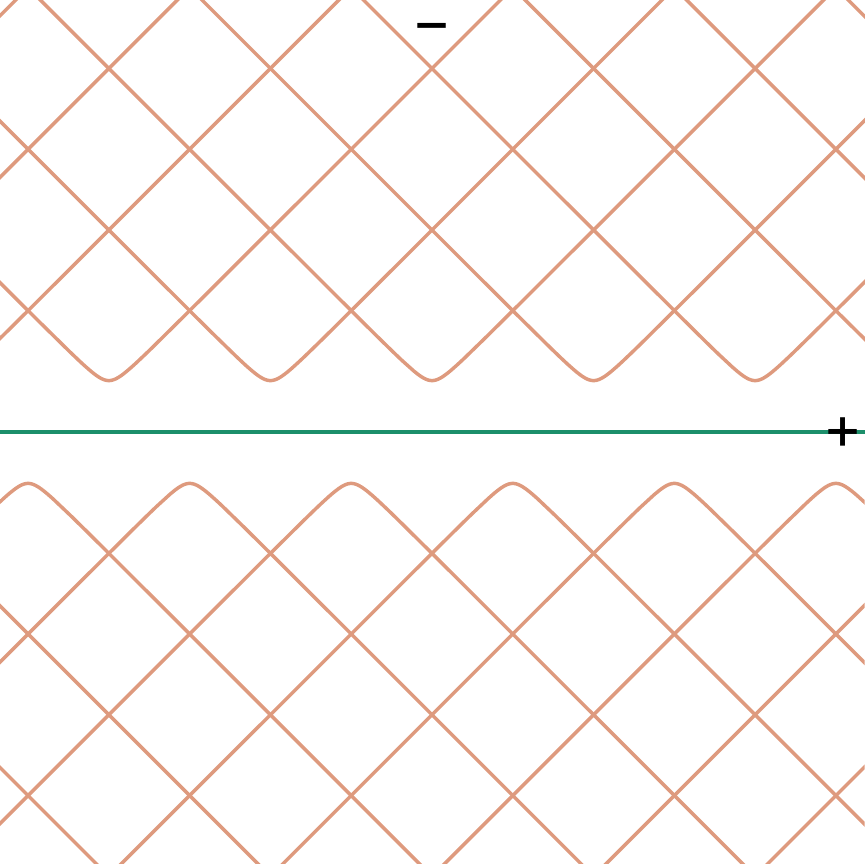}
		\subcaption{\upshape In $\mathbb{R}^{1,1}$.}\label{ALP}
	\end{minipage}
	\hspace{0.1\columnwidth}
	\begin{minipage}[b]{0.35\columnwidth}
		\centering
		\includegraphics[keepaspectratio, width=\columnwidth]{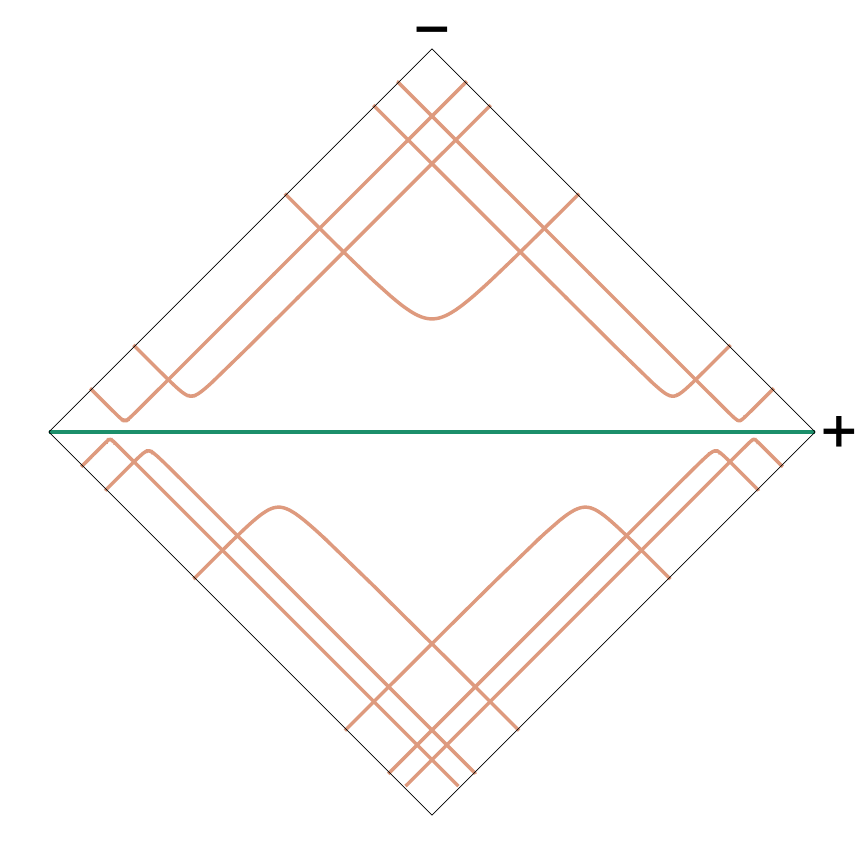}
		\subcaption{\upshape In the Penrose diagram.}\label{PenroseALP}
	\end{minipage}
	\caption{A simultaneously arc-length polarized Darboux pair.}\label{Picture:ArcLengthPolarization}
\end{figure}

\subsection{Comments on type-changing curves (future work)}
In this section, we give some comments on general curves, that is, type-changing curves.
We can apply facts about Darboux transformations of spacelike (or timelike) curves in non-lightlike parts of general curves, however, we need to pay special attention to the lightlike points.

Take a general curve $x(t)$ with a lightlike point $t=t_*$ defined on $\left[a,b\right]\subset \mathbb{R}$. If we choose an initial point at $t=a$, then we can solve the differential equation~\eqref{Equation:CrossRatio} and its Lorentz-Darboux transformation on $\left[a,t_*-\epsilon\right]$ is determined for any $\epsilon>0$.

After that, we again solve the differential equation on $\left[t_* + \epsilon,b\right]$ with an initial condition $\hat{x}(t_* + \epsilon)= \hat{x}(t_*-\epsilon)$. Finally, we take limits as $\epsilon \rightarrow 0$.
We find solutions at $t_*$ by putting together such limits, and adopt this as a definition of a Darboux transformation of general curves.

\begin{remark}
	Since the arc-length polarization becomes infinity at lightlike points, for a Darboux transformation of general curves, we have no choice but to allow the polarization $m(t)$ to take values in $\mathbb{R}^{\times} \cup \{\infty\}$.
	Therefore, we also generalize the polarization to define Darboux transformations of general curves. See also \cites{MR3936232}.
\end{remark}

Let us explore the behavior of Darboux transformations at lightlike points. Let $x, \hat{x}$ be a smooth Darboux pair with an arc-length polarization $m(t):I \rightarrow \mathbb{R}^{\times} \cup \{\infty\}$ of $x$, and let $t_* \in I$ be a point such that $x'$ becomes lightlike at $t=t_*$. Then we have $\lim_{t \rightarrow t_*}|x'(t)|^2=0$ and we have
\[\frac{x'\hat{x}'}{|x'|^2}=\lambda (\hat{x}-x)^2.\]
According to the values of $\hat{x}'$ and $(\hat{x}-x)^2$, we need to use L'H$\hat{\mathrm{o}}$pital's rule on the split complex plane.
Since $x$ and $\hat{x}$ are smooth, L'H$\hat{\mathrm{o}}$pital's rule does apply. However, note that unlike 2-dimensional fields, in the split complex plane we use it when the limits get close to not only $0$, but also zero divisors.

\begin{figure}[hbt]
	\centering
	\begin{minipage}[b]{0.375\columnwidth}
		\centering
		\includegraphics[keepaspectratio, width=\columnwidth]{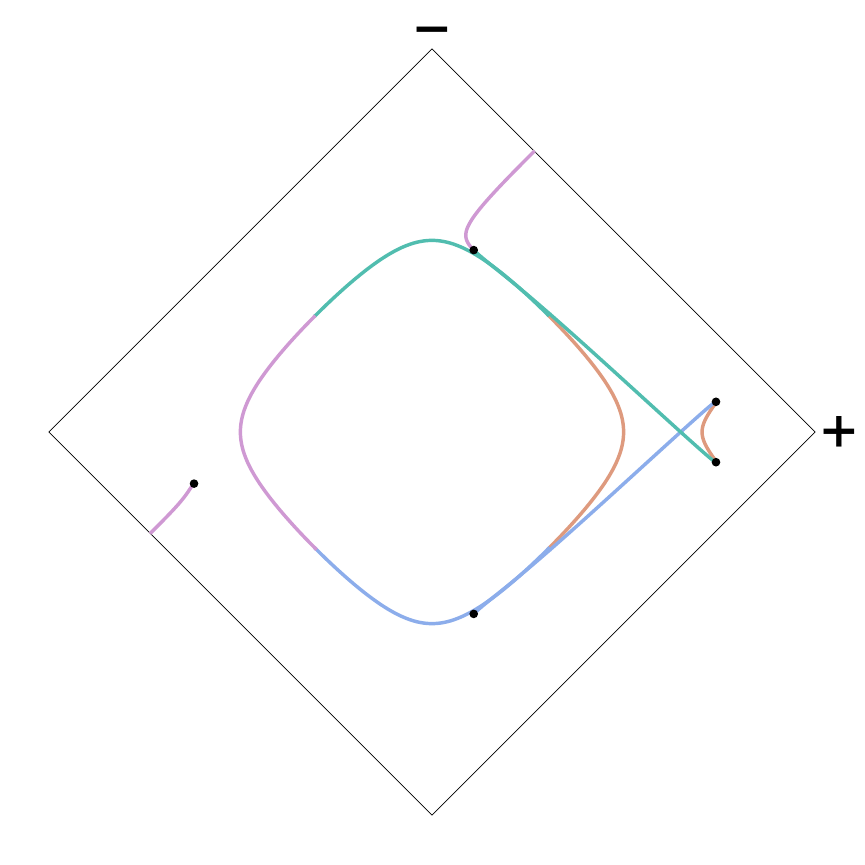}
		\subcaption{\upshape $\lambda >0$ case.}\label{PenroseGeneral1}
	\end{minipage}
	\hspace{0.05\columnwidth}
	\begin{minipage}[b]{0.375\columnwidth}
		\centering
		\includegraphics[keepaspectratio, width=\columnwidth]{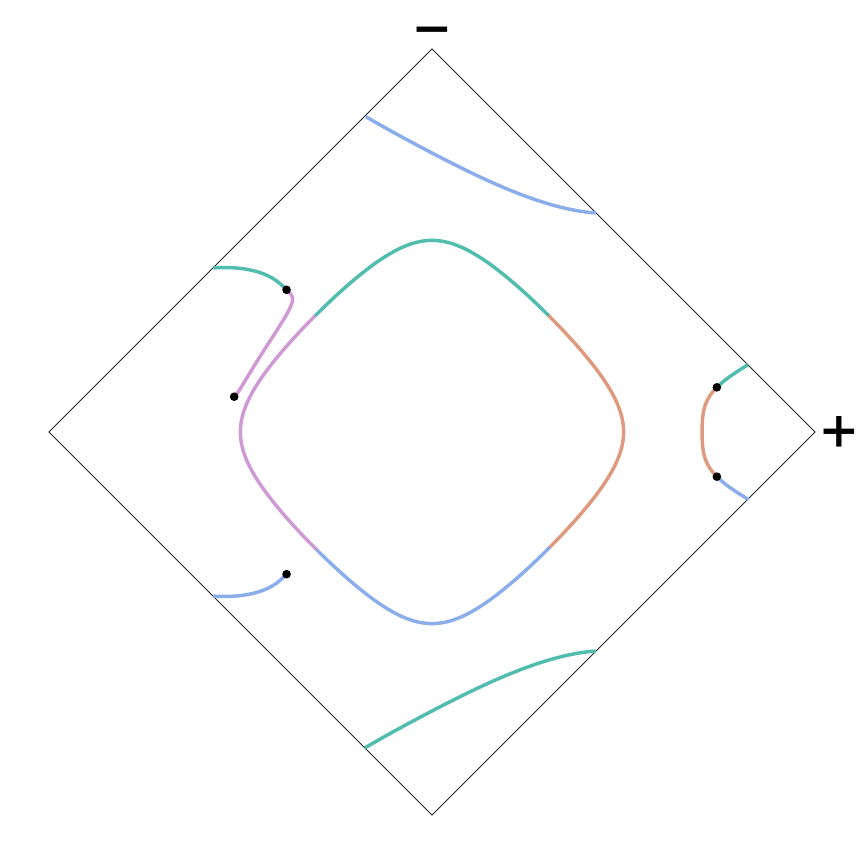}
		\subcaption{\upshape $\lambda <0$ case.}\label{PenroseGeneral2}
	\end{minipage}
	\caption{Darboux transformations of an embedded Euclidean circle.}\label{Picture:DarbouxTransformationOfCircle}
\end{figure}

As examples, in Figure~\ref{Picture:DarbouxTransformationOfCircle}, the round closed curve in the center is an embedded Euclidean circle. Each of the colored parts correspond and black points on the Darboux transformations correspond to the lightlike points of the embedded circle. These figures are given numerically, and we seem to observe singular points at lightlike points.

\vspace{10pt}

\noindent\textbf{Acknowledgements.}
I would like to express my gratitude to my adviser Wayne Rossman for his support and continued feedback throughout this work, and my appreciation for valuable conversations with Udo Hertrich-Jeromin, Katrin Leschke, Masashi Yasumoto, Joseph Cho and Denis Polly.
Further, this work was supported by JST SPRING, Grant Number JPMJSP2148.

\nocite{*}
\bibliography{./references.bib}

\end{document}